  \crefname{theorem}{Theorem}{Theorems}
  \crefname{thm}{Theorem}{Theorems}
  \crefname{lemma}{Lemma}{Lemmas}
  \crefname{lem}{Lemma}{Lemmas}
  \crefname{remark}{Remark}{Remarks}
  \crefname{prop}{Proposition}{Propositions}
  \crefname{proposition}{Proposition}{Propositions}
\crefname{notation}{Notation}{Notations}
\crefname{claim}{Claim}{Claims}
  \crefname{defn}{Definition}{Definitions}
  \crefname{corollary}{Corollary}{Corollaries}
  \crefname{section}{Section}{Sections}
  \crefname{figure}{Figure}{Figures}
  \crefname{exercise}{Exercise}{Exercises}
    \crefname{assumption}{Assumption}{Assumptions}
\newtheorem{thm}{Theorem}[section]
\newtheorem{lemma}[thm]{Lemma}
\newtheorem{corollary}[thm]{Corollary}
\newtheorem{defn}[thm]{Definition}
\numberwithin{equation}{section}
\theoremstyle{definition}
\def\cS{\mathcal{S}}
\def\P{\mathbb{P}}
\def\Z{\mathbb{Z}}
\def\1{\mathbf{1}}
\DeclareMathOperator{\w} {\mathsf{w}}
\newcommand{\note}[1]{{\color{red}{[note: #1]}}}
\title{Forests on wired regular trees}
\author{Gourab Ray\thanks{University of Victoria. Research supported by NSERC 50311-57400} \and Ben Xiao \thanks{University of Victoria}}
\date{July 2021}
\newcommand{\p}{\mathbbm{P}}
\newcommand{\ext}{Z^{\times}}
\begin{document}
\maketitle

\begin{abstract}
The Arboreal gas model on a finite graph $G$ is the Bernoulli bond percolation on $G$ conditioned on the event that the sampled subgraph is a forest.  In this short note  we study the arboreal gas on a regular tree wired at the leaves and obtain a comprehensive description of the weak limit of this model.
\end{abstract}

\section{Introduction}\label{sec:intro}
Let $G = (V,E)$ be a finite graph. A forest is a possibly disconnected graph with no cycles. A connected forest is a tree. The \textbf{Arboreal gas} model with parameter $p \in [0,1)$ is a probability measure on $\{0,1\}^{E}$ defined as 
\begin{equation}
\P_{G, p}(f) = \frac{p^{o(f)}(1-p)^{c(f)}\mathbbm{1}_{\text{f is a forest}}}{Z_{G,p}} , \qquad f \in \{0,1\}^{E}\label{eq:gas} ,
\end{equation}
where $o(f)$ is $\{e\in E: f(e)=1\}$ (called \textbf{open} edges) and $c(f) = E \setminus o(f)$ (called \textbf{closed edges}) and  $Z_{G,p}$ is the appropriate partition function. Note that if $G$ has no cycles then this model is simply the Bernoulli bond percolation model with parameter $p$. As $p \to 1$ this model converges to the uniform spanning tree on $G$. Although the uniform spanning tree on finite and infinite graphs has been studied in great depth, (we refer to \cite{lyons2017probability} and references therein for the interested reader) the Arboreal gas model is much less studied even when $p$ is close to 1. The reason for this stems from the fact that the tools used to study uniform spanning tree, viz Wilson's algorithm and Electrical network theory is no longer applicable for any $p<1$.

The goal of this note is to study the Arboreal gas model on a regular tree which we define now. 
Fix an integer $d \ge 2$ and let $T_{n,d}$ denote the ordered $d$-ary tree of \textbf{depth} $n$ described as follows: $T_{n,d}$ is a tree where all degrees are $d$ except one vertex which has degree $d-1$ (called the \textbf{root vertex}, denoted $\rho$) and  the leaves (degree 1 vertices) and furthermore all leaves are at graph distance exactly $n$ from $\rho$. Let $T^{\w}_{n,d}$ denote the tree $T_{n,d}$ where all the leaves are identified into a single vertex denoted by $\partial$ (this enforces a \textbf{wired boundary}). We denote the vertex set of $T^{\w}_{n,d}$ by $V_{n,d}$, the edge set by $E_{n,d}$.

 Let $T_d$ denote the infinite $d$-ary tree, i.e., it is the unique infinite tree with where every vertex has degree $d$ except one vertex, also called the root (denoted $\rho$). Note that if we identify all vertices which are at distance strictly more than $n-1$ from the root into a single vertex and remove all the self loops then we obtain the graph $T_{n,d}^{\w}$. We now state the main result of this note. We write $\P_{n,d,p}$ in short for $\P_{T_{n,d}^{\w},p}$.

\begin{thm}\label{thm:main}
Fix $p \in [0,1)$. As $n \to \infty$, $\P_{n,d,p}$ converges weakly to a limiting law $\P_{d,p}$. Furthermore the law $\P_d$ can described explicitly as a hidden tree indexed Markov chain model.
\end{thm}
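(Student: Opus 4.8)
The plan is to strip off the conditioning, reduce the whole question to a single scalar recursion, and then recognise the limit. First I would observe that, since $T_{n,d}$ is itself a tree, every cycle of $T^{\w}_{n,d}$ must pass through $\partial$ and hence traverse at least two of the identified leaf-edges; a short Euler-characteristic count (comparing the numbers of vertices and edges in the component of $\partial$) then shows that a subgraph is a forest in $T^{\w}_{n,d}$ exactly when, viewed as a subforest of $T_{n,d}$, each of its clusters meets at most one leaf. Thus $\P_{n,d,p}$ is simply Bernoulli$(p)$ percolation on the genuine tree $T_{n,d}$ conditioned on the event that no cluster reaches two leaves, and I would work throughout with this reformulation.

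Next I would build the transfer recursion from the leaves towards $\rho$. For a height-$h$ subtree let $\ext_h$, resp. $\surv_h$, denote the partition functions (with weight $p^{\#\text{open}}(1-p)^{\#\text{closed}}$) of the admissible configurations in which the cluster of the subtree's root reaches no leaf, resp. exactly one leaf; then $Z_{n,d,p}=\ext_n+\surv_n$, and splitting on the states of the $d-1$ edges below the root and of the $d-1$ height-$(h-1)$ subtrees gives a pair of polynomial recursions for $(\ext_h,\surv_h)$ with an explicit base case at $h=1$ (the root then sitting over $d-1$ parallel edges to $\partial$). The decisive point is that the ratio $t_h:=\surv_h/\ext_h$ collapses to a single M\"obius recursion,
\[
t_h=\frac{(d-1)p\,t_{h-1}}{1+(1-p)t_{h-1}},\qquad t_1=\frac{(d-1)p}{1-p}.
\]
The map $t\mapsto (d-1)pt/(1+(1-p)t)$ is increasing and concave on $[0,\infty)$ with nonnegative fixed points $0$ and $\big((d-1)p-1\big)/(1-p)$, and since $t_1$ exceeds the largest fixed point the iteration decreases monotonically; hence $t_h\to t^*:=\big((d-1)p-1\big)_+/(1-p)$, with a transition at $p_c=1/(d-1)$, the percolation threshold of $T_d$.

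I would then deduce weak convergence from the behaviour of $t_h$. Fixing $m$ and taking $n>m$, the ball $B_m(\rho)$ in $T^{\w}_{n,d}$ coincides with the ball $B_m(\rho)$ in $T_d$; given a configuration $g$ on $B_m(\rho)$, the total weight of admissible extensions factorises over the clusters of $g$ that reach the frontier, a cluster touching $k$ frontier vertices contributing $\ext_{n-m}^{\,k-1}\big(\ext_{n-m}+k\,\surv_{n-m}\big)$, the extensions of distinct clusters being independent. Dividing by $Z_{n,d,p}$ and using that the frontier has exactly $(d-1)^m$ vertices, all powers of $\ext_{n-m}$ cancel and $\P_{n,d,p}(f|_{B_m(\rho)}=g)$ becomes an explicit rational function of the single number $t_{n-m}$; letting $n\to\infty$ and using $t_{n-m}\to t^*$ gives a consistent family of cylinder laws, so Kolmogorov's extension theorem produces $\P_{d,p}$ and the convergence of all cylinder probabilities is the asserted weak convergence.

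For the hidden-chain description I would introduce the latent field $\xi_v\in\{0,1,\dots,d-1\}$ recording whether the downward cluster of $v$ carries an infinite open ray and, if so, through which child of $v$ it passes, and then verify from the explicit limiting formula that $(\xi_v)_{v\in T_d}$ is a tree-indexed Markov chain — with root law and transition kernel written out in terms of $p$ and $t^*$ — whose open-edge configuration is a conditionally independent function of the endpoint states; that is, $\P_{d,p}$ is precisely the edge-marginal of a hidden tree-indexed Markov chain. I expect the genuine work to lie in the reduction and the bookkeeping: one must treat the wired (multigraph) boundary correctly in the base case and in the extension weights, and one must spot the cancellation of the $\ext_{n-m}$-powers that collapses the entire cylinder probability onto the scalar $t_{n-m}$ — once this is in hand the limit follows from the elementary M\"obius iteration, and pinning down the exact state space and kernel of the hidden chain is then only a computation.
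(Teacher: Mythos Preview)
Your argument is correct and follows a somewhat different route from the paper. Both approaches rest on the same scalar recursion --- the paper works with $K_n=\ext_n/\surv_n=1/t_n$ and solves it in closed form rather than appealing to a fixed-point argument, but this is cosmetic. Where you diverge is in the passage to the limit. The paper introduces the auxiliary edge-colouring $\phi:\{0,1\}^{E}\to\{0',1',2'\}^{E}$ \emph{before} taking $n\to\infty$, shows that at every finite $n$ the coloured process is already a tree-indexed Markov chain whose one-step kernel depends only on the height $m=n-k$ above the boundary (Lemma~3.1), and then lets these kernels converge (Lemma~3.2); weak convergence of $\P_{n,d,p}$ follows by continuity of $\phi^{-1}$. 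You instead compute the marginal on $B_m(\rho)$ directly, observe that the extension weight of each frontier-touching cluster is homogeneous of degree $k_C$ in $(\ext_{n-m},\surv_{n-m})$, and hence that the whole cylinder probability is a continuous function of $t_{n-m}$ alone. Your route is more economical for the bare convergence statement, since it never introduces the hidden colouring; the paper's route has the advantage that the Markov structure of the limit is inherited for free from the finite-$n$ chains, whereas you must extract it afterwards from the explicit cylinder formula.

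Two small remarks. First, an off-by-one: in the paper's computations each vertex of $T_{n,d}$ has $d$ (not $d-1$) children --- see the factor $dp$ in Lemma~2.1 and the threshold $p=1/d$ in Theorem~1.2 --- so your recursion and your $p_c=1/(d-1)$ should be shifted accordingly. Second, your hidden variable $\xi_v$ is vertex-indexed (the direction of the downward ray, or $0$ if none), while the paper's is the edge colouring $\{0',1',2'\}$. Your description does work --- $(\xi_v)$ is Markov and the edges are conditionally independent given it --- but be aware that a non-ray edge $(u,v)$ with $\xi_v=0$ is genuinely undetermined by $\xi$ and needs its own Bernoulli coin, whose parameter is the conditional probability of $1'$ given $\xi_v=0$ rather than the unconditional $\alpha$. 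The paper sidesteps this bookkeeping by carrying the full $\{0',1',2'\}$ state from the start.
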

 
The explicit description of the tree indexed Markov chain model appears later in \cref{thm:main_improved}. For now let us mention that by storing the extra information of whether an open edge is connected to the leaf or not turns the model into a process with nice Markovian properties. This idea is inspired by a paper of H\"aggstr\"om \cite{haggstrom1996random}. This also allows an explicit description of the limiting law which is the content of the next theorem.
\medskip
\begin{thm}\label{thm:description}
The limiting law $\P_{d,p}$ undergoes a phase transition as follows.

\begin{itemize}
\item  For $p \le 1/d$, $\P_{d,p}$ is an i.i.d.\ Bernoulli percolation on $T_d$ with parameter $p$. 
\item If $p \in (1/d,1)$ then $\P_{d,p}$ has infinitely many infinite components. Furthermore each infinite component is one ended and can be described as follows. 
\begin{itemize}
\item Suppose $e = (u,v)$ be an edge with $u$ being closer to the root. Conditioned on $e$ being closed, $v$ has a probability $q = \frac{dp - 1}{p(d - 1)}$ of being in an infinite component which can be sampled as follows. Declare a child edge $(v,v_1)$ of $v$ to be open uniformly at random, then inductively having defined $v_i$, declare a child edge $(v_i,v_{i+1})$ to be open uniformly at random. This defines an infinite path $P$.
\item Each vertex in such a path $P$ has an independent $Bin(d-1,d)$ many edges attached to it which do not belong to $P$ obtained by doing Bernoulli percolation on the $d-1$ edges with parameter $1/d$. The other endpoints of these edges are attached to almost surely finite trees which are distributed as independent critical branching processes with offspring distribution given by Bin$(d,1/d)$, and obtained by sampling a Bernoulli bond percolation with parameter $1/d$. 
\end{itemize}
 \item Furthermore, all the finite components are also distributed as independent critical branching process with offspring distribution Bin$(d,1/d)$.
\end{itemize}
\end{thm}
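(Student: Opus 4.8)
The plan is to make rigorous the heuristic announced just before the theorem: once, for every open edge, one records whether the cluster of that edge reaches the wired vertex $\partial$, the model becomes a tree-indexed Markov chain whose transition kernel is governed by a single scalar recursion, and the whole statement is extracted from the fixed-point analysis of that recursion together with a careful $n\to\infty$ limit. First I would pin down the local form of the constraint: since $T_{n,d}$ is a tree, a configuration $f$ fails to be a forest in $T^{\w}_{n,d}$ exactly when some cluster contains two distinct leaves, and, examining the highest branch point of the path between two such leaves, this happens iff some vertex $v$ has two children $w$ with $vw$ open and the cluster of $w$ inside the subtree below $w$ reaching a leaf. Writing $Y_v\in\{0,1\}$ for the event that the cluster of $v$ inside the subtree below $v$ reaches $\partial$, the measure $\P_{n,d,p}$ is the law of (edge states, $(Y_v)_v$) carrying the extra weight $\prod_v\mathbbm{1}[\#\{w:\,vw\text{ open},\,Y_w=1\}\le 1]$. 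Read from the root towards the leaves, $(e_v,Y_v)_v$ is then a tree-indexed Markov chain: by the spatial Markov property, given the state at $v$ the subtree below $v$ is conditionally independent of the rest, and the $d$ children states are sampled from an explicit exchangeable kernel (a product when $Y_v=0$, a mixture of products selecting the unique external child when $Y_v=1$) --- the arboreal-gas analogue of the random-cluster representation of \cite{haggstrom1996random}.

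Next I would set up the recursion. The kernel at a vertex whose subtree has depth $m$ is an explicit function of $r_m:=\beta_m/\alpha_m$, where $\alpha_m$ (resp.\ $\beta_m$) is the total $p$-weight of forest configurations on the depth-$m$ wired subtree whose root is internal (resp.\ reaches $\partial$). These satisfy
\[
\alpha_m=\bigl(\alpha_{m-1}+(1-p)\beta_{m-1}\bigr)^{d},\qquad \beta_m=d\,p\,\beta_{m-1}\bigl(\alpha_{m-1}+(1-p)\beta_{m-1}\bigr)^{d-1},
\]
with $\alpha_0=0$, $\beta_0=1$, hence $r_m=g(r_{m-1})$ with $g(r)=\dfrac{d\,p\,r}{1+(1-p)r}$. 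The fixed points of $g$ are $0$ and $r^\ast=\dfrac{dp-1}{1-p}$, and since $g'(0)=dp$ we get $r_m\to 0$ for $p\le 1/d$ and $r_m\to r^\ast$ for $p>1/d$. Feeding $r_m\to r_\infty$ into the kernels re-proves convergence (this is \cref{thm:main}) and identifies the limit: in $\P_{d,p}$ each non-external child edge is open with probability $p/(1+(1-p)r_\infty)$; when $Y_v=1$ the unique external child is uniform among the $d$ children; and a vertex with closed parent edge has $Y=1$ with probability $r_\infty/(1+r_\infty)$.

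Then I would read off the limit. If $p\le 1/d$ then $r_\infty=0$, the kernel degenerates to ``each edge open with probability $p$, all $Y$'s zero'', so $\P_{d,p}$ is Bernoulli($p$) on $T_d$. If $p>1/d$, the identity $1+(1-p)r^\ast=dp$ yields the key simplification $p/(1+(1-p)r^\ast)=1/d$: conditionally on being internal a vertex opens each child edge independently with probability $1/d$ to further internal vertices, so every cluster not joined to $\partial$ is a Bernoulli($1/d$) cluster, i.e.\ a critical $\mathrm{Bin}(d,1/d)$ branching process, hence a.s.\ finite. In a cluster reaching $\partial$ the local constraint forbids two external children anywhere, so the external edges form a single descending path $P$ --- whence one-ended --- along which the next vertex is uniform by exchangeability and off each vertex of which hang $\mathrm{Bin}(d-1,1/d)$-many independent critical trees of the above type; and $r^\ast/(1+r^\ast)=\frac{dp-1}{p(d-1)}=q$ is exactly the probability that a vertex with closed parent edge tops such an infinite cluster. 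Finally, along pairwise incomparable rays one finds infinitely many vertices with closed parent edge and disjoint downward subtrees; conditionally on the finitely many separating edges these independently reach $\partial$ with probability at least $q>0$, and since every infinite cluster is one-ended the resulting disjoint infinite descending rays lie in distinct clusters, so there are a.s.\ infinitely many infinite clusters.

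The fixed-point analysis is routine; the two delicate points are (i) checking carefully that $(e_v,Y_v)_v$ really is a tree-indexed Markov chain with the stated exchangeable kernel --- that conditioning on the state at $v$ decouples the subtrees below its children except through the single ``external slot'' --- and (ii) upgrading weak convergence of the finite kernels to the pathwise description, in particular handling the non-clopen event ``the cluster of $v$ is infinite'' by sandwiching it between the clopen events ``the cluster of $v$ has depth at least $\ell$'', and verifying that the effective internal parameter is exactly $1/d$ rather than merely asymptotically $1/d$.
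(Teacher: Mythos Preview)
Your proposal is correct and follows essentially the same route as the paper: augment each open edge (or its lower endpoint) with the indicator of reaching $\partial$, observe that the enriched process is a tree-indexed Markov chain whose kernel depends only on a scalar $r_m=\beta_m/\alpha_m$ (the paper uses the reciprocal $K_m=Z^\times_m/Z^S_m$), pass to the limit, and read off the description from the fixed point $r^\ast=(dp-1)/(1-p)$ together with the key simplification $p/(1+(1-p)r^\ast)=1/d$. The only notable differences are cosmetic: the paper solves the recursion for $K_m$ in closed form rather than via fixed-point analysis (which additionally yields the convergence rate), and it encodes the hidden state on edges as $\{0',1',2'\}$ rather than via vertex labels $Y_v$; conversely, you are more explicit than the paper about why the limiting $2'$-label coincides with the non-clopen event ``infinite cluster'' (it requires the a.s.\ extinction of the $\mathrm{Bin}(d,1/d)$ process) and about the Borel--Cantelli step for infinitely many infinite clusters.
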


The random forest model on complete graphs was studied in \cite{luczak1992components,martin2017critical} and more recently in the regular lattice $\Z^d$ in \cite{bauerschmidt2021random} (for $d=2$) and \cite{bauerschmidt2021percolation} (for $d \ge 3$). We refer to these papers for more references and history of the model. Let us mention that both the papers \cite{bauerschmidt2021random,bauerschmidt2021percolation} use a connection between the two point function in the Arboreal gas model and a non linear sigma model with hyperbolic target space. In contrast, our techniques are quite elementary and relies heavily on the regularity of the tree which leads to an explicit recursion. What is perhaps surprising is that this recursion can be explicitly solved.

The results we have are in line with the curious phenomenon also present in \cite{bauerschmidt2021random,bauerschmidt2021percolation}: in the supercritical phase, the finite components behave like a critical component, in particular their diameter and volume have polynomial tails, which is in sharp contrast with more standard models like Bernoulli percolation. However there is one observation which might only be seen in the tree or in general in a nonamenable graph. The Bernoulli percolation model with parameter $p$ can be seen as the `free' Arboreal gas model on the tree. It is well known that in the supercritical phase of a branching process, the finite components  behave like a subcritical branching process and their volume has exponential tail \cite{athreya2004branching}. This is in contrast to the behaviour in the wired tree where all finite clusters behave like a critical branching process. Comparing the behaviour of the wired and the free Arboreal gas models in a general graph remains a natural open question.

While preparing this manuscript it has come to our attention that P. Easo has been independently and simultaneously working on a similar problem \cite{easo}, although his proof takes a different approach than ours.

\section{Recursion}\label{sec:recursion}
Given a forest $f  = (f_e)_{e \in E_{n,d}} \in \{0,1\}^{E_{n,d}}$,  We say $x$ is connected to $y$ in $f$ there is an open path in $f$ connecting $x$ and $y$ and denote this by $x \leftrightarrow y$. Also given an edge $e$, let $e_-$ denote the vertex of $e$ closer to the root $\rho$ and let $e_+$ be the vertex further to the root. The edges attached to $e_+$ are called the \textbf{children edges} of $e$ and the edge attached to $e^-$ is called the \textbf{parent edge} of $e$. The children edges of the parent edges of $e$ except $e$ are called the \textbf{siblings} of $e$. Note that every edge has $d-1$ siblings.

We start with a few definitions. 
\begin{itemize}
\item If $f$ is a forest in $T^{\w}_{n,d}$, then define $w(f) = p^{o(f)}(1-p)^{c(f)}$ to be the weight of $f$.
\item We define $Z_{n,d,p}$ to be the partition function associated with the Arboreal gas measure $\P_{n,d,p}$. We write $Z_{n,d,p} = Z_{n,d,p}^{S} + \ext_{n,d,p}$ where $Z_{n,d,p}^{S}$ is the sum over the weights of all forests where $\rho \leftrightarrow \partial$ and $\ext_{n,d,p}$ is the sum over the rest. 

\item We define $q_{n,d,p} = \P_{n,d,p}(0 \leftrightarrow \partial )$. Note that in this notation $$q_{n,d,p} = \frac{Z_{n,d,p}^S}{Z_{n,d,p}}; \qquad 1-q_{n,d,p} = \frac{\ext_{n,d,p}}{Z_{n,d,p}}.$$ We think of this quantity as the probability of `survival' if we interpret the Arboreal gas as a branching process.
\end{itemize}

\begin{lemma}\label{lem:recursion_partition}
Fix $d, p \in [0,1)$. We have
$$Z_{n,d,p}^S = dpZ_{n-1,d,p}^S((1-p)Z_{n-1,d,p}^S + Z_{n-1,d,p}^{\times})^{d-1} \text{ and }\ext_{n,d,p} = ((1-p)Z_{n-1,d,p}^S + \ext_{n-1,d,p})^d.$$
\end{lemma}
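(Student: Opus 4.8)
The plan is to decompose a forest on $T^{\w}_{n,d}$ according to what happens at the root $\rho$ and its $d$ incident edges, and to match the two pieces $Z^S$ (weight of forests with $\rho \leftrightarrow \partial$) and $\ext$ (weight of forests with $\rho \not\leftrightarrow \partial$) with the corresponding quantities one level down. The key structural observation is that $T^{\w}_{n,d}$ can be built from $d$ copies of $T^{\w}_{n-1,d}$: delete $\rho$ and keep the $d$ subtrees rooted at its children, each of which — after the leaves are wired to the common vertex $\partial$ — is a copy of $T^{\w}_{n-1,d}$, and the $\partial$ of each copy is the same vertex $\partial$. (Here I am using the remark in the introduction that $T^{\w}_{n,d}$ is obtained from the infinite $d$-ary tree by identifying everything past depth $n-1$; the root of each child-subtree has degree $d$, as required.) So a configuration on $E_{n,d}$ is exactly: a choice of state for each of the $d$ root-edges $e_1,\dots,e_d$, together with a configuration $f^{(i)}$ on each copy $T^{\w}_{n-1,d}$.

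For $\ext_{n,d,p}$: here $\rho$ is not connected to $\partial$. This means that for each child edge $e_i$, if $e_i$ is open then the child vertex $c_i$ must \emph{not} be connected to $\partial$ inside the $i$-th subtree — otherwise $\rho \leftrightarrow \partial$ — and also there is no cycle constraint linking the subtrees (they meet only at $\partial$, and none of them reaches $\partial$ in a way that would close a cycle through $\rho$). I would argue that the forest condition on $T^{\w}_{n,d}$ restricted to this event is equivalent to: each $f^{(i)}$ is a forest on $T^{\w}_{n-1,d}$ with $c_i \not\leftrightarrow \partial$, independently, with $e_i$ open or closed freely. Summing the weights: if $e_i$ is closed it contributes a factor $(1-p)$ and $f^{(i)}$ contributes $\ext_{n-1,d,p}$; if $e_i$ is open it contributes $p$ times the sum of weights of forests on $T^{\w}_{n-1,d}$ with $c_i\not\leftrightarrow\partial$, but since $c_i$ is the root of that copy this is again $\ext_{n-1,d,p}$. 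Wait — one must be careful: opening $e_i$ adds the edge's weight $p$, so the $i$-th factor is $(1-p)\ext_{n-1,d,p} + p\,\ext_{n-1,d,p}$? That would give $\ext_{n-1,d,p}$, not $(1-p)Z^S_{n-1} + \ext_{n-1}$. The resolution — and the point I would flag as the one requiring the most care — is that $\ext_{n-1,d,p}$ as defined already sums over configurations on $E_{n-1,d}$ \emph{without} the edge from $\rho$'s parent, so the edge $e_i$ is genuinely extra; and the event ``$c_i \not\leftrightarrow \partial$ in the whole graph'' when $e_i$ is open is ``$c_i\not\leftrightarrow\partial$ in the subtree,'' giving factor $p\,\ext_{n-1,d,p}$; whereas when $e_i$ is open we must instead track whether the subtree-forest has its root connected to $\partial$ — if it does, $\rho\leftrightarrow\partial$, contradiction — so indeed the open case contributes $p(Z^{?})$. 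I expect the correct bookkeeping to yield the $i$-th factor $(1-p)Z^S_{n-1,d,p} + \ext_{n-1,d,p}$ by re-examining which piece the ``open $e_i$'' case lands in: opening $e_i$ \emph{merges} $c_i$'s cluster into $\rho$'s, so ``$c_i$'s subtree-cluster reaches $\partial$'' is the forbidden case, and the allowed subcase splits as ($e_i$ open, $c_i\not\leftrightarrow\partial$: weight $p\,\ext_{n-1}$) plus ($e_i$ closed, anything: weight $(1-p)(Z^S_{n-1}+\ext_{n-1})$) — and combining gives $(1-p)Z^S_{n-1} + ((1-p)+p)\ext_{n-1} = (1-p)Z^S_{n-1}+\ext_{n-1}$, as claimed. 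Raising to the $d$-th power because the $d$ subtrees are independent finishes this case.

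For $Z^S_{n,d,p}$: now exactly the path $\rho\leftrightarrow\partial$ exists. Since the only way out of $\rho$ is through the edges $e_i$, and a forest has no cycles, \emph{exactly one} $e_i$ can be open with its subtree-cluster reaching $\partial$ (if two did, there would be a cycle through $\rho$). Choose which $i$: $d$ ways, contributing the factor $d$. For that distinguished $i$: $e_i$ is open (factor $p$) and $f^{(i)}$ is a forest on $T^{\w}_{n-1,d}$ with $c_i\leftrightarrow\partial$, contributing $Z^S_{n-1,d,p}$. For each of the other $d-1$ edges $e_j$: the subtree-cluster of $c_j$ must not reach $\partial$ (else cycle), so by the same split as above this factor is $(1-p)Z^S_{n-1,d,p}+\ext_{n-1,d,p}$. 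Multiplying gives $Z^S_{n,d,p} = d\,p\,Z^S_{n-1,d,p}\bigl((1-p)Z^S_{n-1,d,p}+\ext_{n-1,d,p}\bigr)^{d-1}$. The main obstacle, as indicated, is the careful justification that the forest/connectivity constraint on $T^{\w}_{n,d}$ decouples \emph{exactly} into independent constraints on the $d$ subtrees once the state of $\rho$'s edges is fixed — i.e., that no cycle can be formed ``through $\partial$'' in a way that is not already captured by ``at most one subtree-cluster reaches $\partial$.'' This uses that $\partial$ is the unique shared vertex of the subtrees and that each subtree is itself a tree, so a cycle in the full graph must pass through $\rho$ (through two distinct $e_i$'s) or through $\partial$ (through two distinct subtrees reaching $\partial$); in the $\ext$ case the latter is still possible and is precisely why each factor is $(1-p)Z^S_{n-1}+\ext_{n-1}$ rather than just $\ext_{n-1}$ — at most one subtree may reach $\partial$. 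I would make this precise with a short lemma on cycles in graphs glued along a single vertex, then the weight factorization is a routine distributive-law computation.
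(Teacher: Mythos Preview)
Your approach is the same as the paper's: decompose at the root's $d$ incident edges and track, for each child subtree (a copy of $T^{\w}_{n-1,d}$), whether its root is connected to $\partial$. Your per-edge factor computation $(1-p)Z_{n-1} + p\,\ext_{n-1} = (1-p)Z^S_{n-1} + \ext_{n-1}$ is correct, and the paper arrives at the same expression by writing out an explicit binomial sum over the number of open root-edges and then applying the binomial theorem.

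However, the justification you give at the end for why the factorization is legitimate contains a genuine error. You assert that a cycle in the glued graph can arise ``through $\partial$ (through two distinct subtrees reaching $\partial$)'' and conclude that in the $\ext$ case ``at most one subtree may reach $\partial$.'' This is false. Gluing several graphs at a single common vertex never creates a new cycle: if subtree $i$ has an open path $P_i$ to $\partial$ and subtree $j$ has an open path $P_j$ to $\partial$, with $e_i$ and $e_j$ both closed, then $P_i \cup P_j$ is just a path through $\partial$, not a cycle. The \emph{only} new cycles come through $\rho$: if $e_i$ and $e_j$ are both open and $c_i,c_j$ are both connected to $\partial$ in their respective subtrees, you get the cycle $\rho\,e_i\,c_i\cdots\partial\cdots c_j\,e_j\,\rho$. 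In the $\ext$ case this is already ruled out by the stronger per-edge condition ``$e_i$ open $\Rightarrow c_i\not\leftrightarrow\partial$,'' which is why the constraints decouple across $i$ and the product $((1-p)Z^S_{n-1}+\ext_{n-1})^d$ is valid. Had you actually imposed the global constraint ``at most one subtree reaches $\partial$,'' the subtrees would be dependent and the $d$-th power would be wrong. (Relatedly, the phrase ``each subtree is itself a tree'' is incorrect: $T^{\w}_{n-1,d}$ is not a tree, since wiring the leaves creates cycles; what you need is only that each $f^{(i)}$ is a forest in $T^{\w}_{n-1,d}$, which is part of the data.) Once the cycle analysis is corrected to ``all new cycles pass through $\rho$,'' your earlier case-split already gives the full argument, and it matches the paper's.
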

\begin{proof}
We drop $d,p$ from the subscript in the notation for simplicity.
The only way to survive is if $\rho$ is connected to $\partial$ through a path which goes through one of the edges attached to the root and none of the other edges attached to the root reach $\partial$. The child which survives contributes a weight $pZ^S_{n-1}$. To take care of the other children, we need to select $i$ many to be open but must lead to extinction and $d-1-i$ many to be closed but the paths may or may not survive. This has weight $$\sum_{i = 0}^{d-1}\binom{d-1}{i} (p\ext_{n-1})^i((1-p)Z_{n-1})^{d-1-i}.$$ Overall, we see that $$Z_{n}^S = dpZ^S_{n-1}\sum_{i = 0}^{d-1}\binom{d-1}{i} (p\ext_{n-1})^i((1-p)Z_{n-1})^{d-1-i}.$$ Applying the binomial theorem yields $$Z_{n}^S = dpZ_{n-1}^S((1-p)Z_{n-1} + pZ_{n-1}^{\times})^{d-1}.$$ Substituting $Z_{n-1} = Z_{n-1}^S + \ext_{n-1} $ and simplifying we get the desired expression as claimed.

For $\ext_n$, we need to pick $i$ many open edges but they must lead to extinction and $d-i$ many closed edges and they may or may not be extinct and so we get $$\sum_{i = 0}^{d}\binom{d}{i} (p\ext_{n-1})^i((1-p)Z_{n-1})^{d-i}$$ and so we arrive again at our desired expression after applying the Binomial theorem and simplifying.
\end{proof}
Our goal is to now solve the family of recursions $Z_n^S$ and $\ext_n$ with the initial conditions $Z_0^S = 1$ and $\ext_0 = 0$ by convention (for $n=0$, the root is connected to itself). Set
$$
K_{n,d,p}  =\frac{\ext_{n,d,p}}{Z^S_{n,d,p}} = \frac{(1-p)Z_{n-1,d,p}^S + Z_{n-1,d,p}^{\times} }{ dpZ_{n-1,d,p}^S}.
$$
where the last equality is a consequence of \cref{lem:recursion_partition}. Note that $Z_{n,d,p}^S$ is $0$ only when $p = 0$ and since that case is not so interesting, we will from now on assume $p > 0$.

\begin{lemma}\label{lem:recursion_2}
Set $p \in (0,1), n \ge 0.$ Then
 $$K_{n,d,p} = \frac{(1-p)}{(dp)^n} \sum_{i = 0}^{n-1}(dp)^i $$
 Consequently if $p \neq 1/d$ then
 $$
 K_{n,d,p}  =  \frac{1-p }{dp - 1} (1-(dp)^{-n})
 $$
and if $p = \frac1d$,
$$
K_{n,d,p} = \left(1-\frac1d\right)n.
$$
\end{lemma}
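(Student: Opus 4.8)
The plan is to collapse the two coupled recursions of \cref{lem:recursion_partition} into a single affine (inhomogeneous linear) recursion for $K_n := K_{n,d,p}$ and then solve it in closed form. As in the proof of \cref{lem:recursion_partition} I drop $d,p$ from the notation, and I use that $p>0$, so that $Z^S_{n-1}>0$ for every $n\ge 1$ and all the ratios below are legitimate; by the stated convention $Z_0^S=1$, $Z_0^{\times}=0$, hence $K_0=0$.

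First I would start from the second expression for $K_n$ recorded just before the lemma, namely $K_n=\dfrac{(1-p)Z_{n-1}^S+Z_{n-1}^{\times}}{dp\,Z_{n-1}^S}$ for $n\ge 1$ (this is the content of \cref{lem:recursion_partition}), and divide numerator and denominator by $Z_{n-1}^S$. Since $Z_{n-1}^{\times}/Z_{n-1}^S=K_{n-1}$, this gives the affine recursion
$$K_n=\frac{1}{dp}\,K_{n-1}+\frac{1-p}{dp},\qquad n\ge 1,\qquad K_0=0.$$
Iterating this (or a one-line induction on $n$) yields $K_n=\dfrac{1-p}{dp}\sum_{i=0}^{n-1}(dp)^{-i}$, and after reindexing the sum by $j=n-i$ and pulling out a factor $(dp)^{-n}$ one rewrites it as $K_n=\dfrac{1-p}{(dp)^n}\sum_{i=0}^{n-1}(dp)^{i}$, which is the first claimed identity; note it correctly returns $K_0=0$ via the empty sum.

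Finally I would evaluate the finite geometric sum, splitting into the two cases of the lemma. If $dp\neq 1$ then $\sum_{i=0}^{n-1}(dp)^i=\dfrac{(dp)^n-1}{dp-1}$, so
$$K_n=\frac{1-p}{(dp)^n}\cdot\frac{(dp)^n-1}{dp-1}=\frac{1-p}{dp-1}\bigl(1-(dp)^{-n}\bigr),$$
which is the stated formula for $p\neq 1/d$. If $dp=1$, i.e.\ $p=1/d$, the sum is simply $n$ and $(dp)^n=1$, giving $K_n=(1-p)n=\bigl(1-\tfrac1d\bigr)n$.

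I do not expect a real obstacle here: the substantive input—the closed-form recursions for the partition functions—is already provided by \cref{lem:recursion_partition}, and what remains is a standard affine recursion. The only points needing a little care are isolating the degenerate case $p=1/d$, where the geometric series collapses to $n$; checking that the base case $n=0$ is compatible with the convention $Z_0^{\times}=0$; and the harmless bookkeeping in passing between $\sum (dp)^{-i}$ and $\sum (dp)^{i}$.
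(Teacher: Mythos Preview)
Your proposal is correct and follows essentially the same approach as the paper: derive the affine recursion $K_n=\frac{1}{dp}K_{n-1}+\frac{1-p}{dp}$ from \cref{lem:recursion_partition}, solve it by induction/iteration to obtain the finite geometric sum, and then evaluate that sum separately for $dp\neq 1$ and $dp=1$. The only cosmetic difference is that you first write the solution as $\frac{1-p}{dp}\sum_{i=0}^{n-1}(dp)^{-i}$ and then reindex, whereas the paper inducts directly on the stated form.
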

\begin{proof}
We drop the subscripts $d,p$ from the notation. Using \cref{lem:recursion_partition}, we can write $$K_n = \frac{(1-p)Z_{n-1}^S + Z_{n-1}^{\times}}{dpZ_{n-1}^S} = \frac{1-p}{dp} + \frac{\ext_{n-1}}{dpZ_{n-1}^S} = \frac{1-p}{dp} + \frac{1}{dp}K_{n-1}.$$  We claim now the first equality holds and we prove it by induction. Since $K_0 = 0$ by convention and the right hand side is $0$ as well (empty sum is 0), the base case holds. Assume the formula is true for $K_n$. Then $$K_{n+1} = \frac{1-p}{dp} + \frac{1}{dp}K_{n} =  \frac{1-p}{dp} + \frac{1}{dp}\frac{(1-p)}{(dp)^n} \sum_{i = 0}^{n-1}(dp)^i  = \frac{1}{(dp)^{n+1}}[(1-p)(dp)^n + (1-p) \sum_{i = 0}^{n-1} (dp)^i]$$ and factoring out the $1-p$ terms gives us the desired expression. By induction, the formula holds for all $n$. The second equality comes from applying the geometric series formula to the sum.
\end{proof}
An easy consequence of \cref{lem:recursion_2} is that
\begin{corollary}\label{cor:survival_prob}
If $p \le 1/d$, $q_{n,d,p} \to 0$. On the other hand if $p \in (1/d,1)$,  \begin{equation}
q_{n,d,p} \to \frac{dp - 1}{p(d - 1)} =:q_{d,p}\label{eq:survival}
\end{equation}
\end{corollary}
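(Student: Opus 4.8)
The plan is to reduce everything to the formula for $K_{n,d,p}$ already established in \cref{lem:recursion_2}, since $q_{n,d,p}$ is an elementary function of $K_{n,d,p}$. Indeed, by definition $q_{n,d,p} = Z_{n,d,p}^S/Z_{n,d,p}$ and $Z_{n,d,p} = Z_{n,d,p}^S + \ext_{n,d,p}$, so dividing numerator and denominator by $Z_{n,d,p}^S$ (which is nonzero for $p>0$, as noted before the lemma) gives the single identity
$$
q_{n,d,p} = \frac{1}{1 + K_{n,d,p}}.
$$
So the whole corollary is just a matter of reading off the limiting behaviour of $K_{n,d,p}$ from \cref{lem:recursion_2} in the three regimes $p<1/d$, $p=1/d$, $p>1/d$, and this is where I would spend the (minimal) effort.

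First I would handle the supercritical case $p \in (1/d,1)$. Here $dp>1$, so $(dp)^{-n}\to 0$, and the closed-form $K_{n,d,p} = \frac{1-p}{dp-1}\bigl(1-(dp)^{-n}\bigr)$ converges to $\frac{1-p}{dp-1}$. Plugging into the identity above and simplifying,
$$
q_{n,d,p} \longrightarrow \frac{1}{1 + \frac{1-p}{dp-1}} = \frac{dp-1}{(dp-1) + (1-p)} = \frac{dp-1}{p(d-1)},
$$
which is the claimed value $q_{d,p}$.

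Next, the subcritical case $p<1/d$: now $dp<1$, so $(dp)^{-n}\to+\infty$. The prefactor $\frac{1-p}{dp-1}$ is negative (numerator positive, denominator negative), while $1-(dp)^{-n}\to-\infty$, so their product $K_{n,d,p}\to+\infty$; hence $q_{n,d,p}=1/(1+K_{n,d,p})\to 0$. The critical case $p=1/d$ is even more direct: \cref{lem:recursion_2} gives $K_{n,d,p}=(1-1/d)\,n\to\infty$, so again $q_{n,d,p}\to 0$. The only thing to be a little careful about is the sign bookkeeping in the subcritical case and keeping the $p=1/d$ case separate (since the geometric-series simplification degenerates there), but neither is a real obstacle; there is no substantive difficulty, as all the work has already been done in \cref{lem:recursion_partition,lem:recursion_2}.
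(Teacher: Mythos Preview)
Your proposal is correct and is exactly the intended argument: the paper simply records the corollary as ``an easy consequence of \cref{lem:recursion_2}'' without writing out the details, and the identity $q_{n,d,p}=1/(1+K_{n,d,p})$ together with the case analysis you give is precisely how one reads it off.
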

We call $q_{d,p}$ the \textbf{survival probability} drawing an analogue from the classical branching process.
It also follows from the expression in \cref{lem:recursion_2} that if $p <1/d$ then $q_{n,d,p}$ converges to 0 exponentially fast, while if $p = 1/d$, $q_{n,d,p}= d/(d-1)n^{-1}$. This is reminiscent of similar results for branching processes and indeed resonates with \cref{thm:description} that if $p \le 1/d$ then the Arboreal gas converges to an i.i.d.\ Bernoulli$(p)$ process.

\section{A hidden Markov model}
This section is inspired by the ideas in H\"aggstr\"om \cite{haggstrom1996random}.
Given a forest $f  = (f_e)_{e \in E_{n,d}} \in \{0,1\}^{E_{n,d}}$ in we introduce a bijective mapping $\phi : \{0,1\}^{E_{n,d}} \to \{0',1',2'\}^{E_{n,d}}$ as follows. 
\begin{itemize}
\item If $f_e = 0$, $\phi$ always maps it to $0'$.
\item If $f_e =1$, $\phi $ maps it to $2'$ if $e_+ \leftrightarrow \partial$ in $f$ and to $1'$ otherwise.
\end{itemize}
Note that while $\phi$ depends on global properties of $f$, $\phi^{-1}$ is a local map which maps $\{1',2'\}$ to $1$ and $0' $ to 0. Denote the pushforward of $\P_{n,d,p}$ under $\phi$ by $\P'_{n,d,p}$.
We now show that this mapping converts the Arboreal gas model into a Markovian model with states $\{0',1',2'\}$. We record  this observation in the following lemma. Fix an edge $e \in E_{n,d}$ and note that removing $e$ breaks up $T_{n,d}$ into two components, call the component containing the root $C_\rho  = (V(C_\rho), E(C_\rho))$. Suppose $\xi \in \{0',1',2'\}^{E(C_\rho)}$ which is valid in the sense that $\P'_{n,d}$ probability of $C_\rho$ having configuration $\xi$ is strictly positive. Number the children edges $(e_1,e_2, \ldots, e_d)$ ordered from left to right. For any element $f' \in  \{0',1',2'\} ^{E_{n,d}}$ and $S \subset E_{n,d}$, let $f'|_{S}$ denote the restriction of $f'$ to $S$. Let $\cS =(e_1,e_2, \ldots, e_d) $. Suppose $e_+$ is at graph distance $k<n$ from $\rho$. In the following lemma, we drop $d$ from the subscripts of the partition functions. We again drop $d,p$ from the subscripts in the following lemma for brevity.

\begin{lemma}\label{lem:Markovian}
Fix $p \in [0,1)$ and $e, \cS, \xi, C_\rho$ be as above and let $m = n-k$. Suppose $\P'_{n,d,p} (f'|_{C_\rho}=\xi) >0$. Then the following holds for all $e_j$ and for all $\cS_1 \subseteq \cS \setminus \{e_j\}$.
\begin{multline}
\P'_{n,d,p} \left(f'_{e_j} =2' , f'_{g} =1' \forall g \in \cS_1 , f'_{h} = 0' \forall  h \in \cS \setminus (\cS_1 \cup \{e_j\}) \Big|   f'_{e} = 0' , f'|_{C_\rho} = \xi\right) \\
=   \frac{pZ^S_{m-1}(p\ext_{m-1})^{|\cS_1|} ((1-p)Z_{m-1})^{d-|\cS_1|-1} }{  dpZ^S_{m-1} (p\ext_{m-1} + (1-p)Z_{m-1})^{d-1}  + (p\ext_{m-1} + (1-p)Z_{m-1})^{d}}\label{eq:02n}
\end{multline}
Also for any $\cS_1 \subset \cS$,
\begin{multline}
\P'_{n,d,p} \left(f'_{g} =1' \forall g \in \cS_1 , f'_{h} = 0' \forall  h \in \cS \setminus \cS_1) \Big|   f'_{e} = 0' , f'|_{C_\rho} = \xi\right) \\
=\frac{(p\ext_{m-1})^{|\cS_1|} ((1-p)Z_{m-1})^{d-|\cS_1|} }{  dpZ^S_{m-1} (p\ext_{m-1} + (1-p)Z_{m-1})^{d-1}  + (p\ext_{m-1} + (1-p)Z_{m-1})^{d}}\label{eq:01n}
\end{multline}
For any $\cS_1 \subset \cS$,
\begin{multline}
\P'_{n,d,p} \left(f'_{g} =1' \forall g \in \cS_1 , f'_{h} = 0' \forall  h \in \cS \setminus \cS_1) \Big|   f'_{e} = 1' , f'|_{C_\rho} = \xi\right) \\
=\frac{(p\ext_{m-1})^{|\cS_1|} ((1-p)Z_{m-1})^{d-|\cS_1|} }{   (p\ext_{m-1} + (1-p)Z_{m-1})^{d}}\label{eq:1n}
\end{multline}
Finally for all $e_j$ and for all $\cS_1 \subset \cS \setminus \{j\}$.
\begin{multline}
\P'_{n,d,p} \left(f'_{e_j} =2' , f'_{g} =1' \forall g \in \cS_1 , f'_{h} = 0' \forall  h \in \cS \setminus (\cS_1 \cup \{e_j\}) \Big|   f'_{e} = 2' , f'|_{C_\rho} = \xi\right) \\
=   \frac{pZ^S_{m-1}(p\ext_{m-1})^{|\cS_1|} ((1-p)Z_{m-1})^{d-|\cS_1|-1} }{  dpZ^S_{m-1} (p\ext_{m-1} + (1-p)Z_{m-1})^{d-1} }\label{eq:2n}
\end{multline}
In particular, all the expressions in the right hand side above are independent of $\xi$.
\end{lemma}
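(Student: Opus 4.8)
The plan is to reduce everything to the original Arboreal gas measure and to a cut-and-factorize argument at the edge $e$. Since $\phi$ is injective and $\phi^{-1}$ acts edgewise, the $\P'_{n,d,p}$-probability of any set of $\{0',1',2'\}$-configurations equals the $\P_{n,d,p}$-probability of its $\phi$-preimage, which is an ordinary event describing the open/closed status of the relevant edges together with constraints on which endpoints are joined to $\partial$. (This is the whole point of the $0'/1'/2'$-coding: knowing only that $e$ is open does not decide whether the subtree hanging from $e_+$ reaches $\partial$, whereas the coded state of $e$ does — which is what makes the model Markovian.) Deleting $e$ from $T_{n,d}$ leaves $C_\rho$ together with a second piece $T_e$, the subtree rooted at $e_+$; in $T^{\w}_{n,d}$ the subgraph on $e_+$, its descendants and $\partial$ is a copy of $T^{\w}_{m,d}$ with root $e_+$, whose children edges are exactly $\cS=(e_1,\dots,e_d)$ and whose children subtrees are copies of $T^{\w}_{m-1,d}$. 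The only vertex $C_\rho$ and $T_e$ share is $\partial$, together with the edge $e$ when it is open.

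The structural step is that the forest constraint decouples across this cut. If $e$ is closed, a configuration of $T^{\w}_{n,d}$ is a forest if and only if its restrictions to $C_\rho$ and to $T_e$ are each forests: any cycle must pass through $\partial$, the unique shared vertex, and hence lies entirely in one piece. For the same reason, whether an endpoint of an edge of $C_\rho$ is joined to $\partial$ is decided inside $C_\rho$ alone, so the event $f'|_{C_\rho}=\xi$ is determined by the $\{0,1\}$-configuration on $C_\rho$; thus the weight splits as $(1-p)\,w(\phi^{-1}(\xi))\,w_{T_e}$, where $w_{T_e}$ sums over all forests of $T_e$ to $Z_{m,d,p}$. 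If $e$ is open, the forest constraint becomes: both pieces are forests \emph{and} one does not simultaneously have $e_-\leftrightarrow\partial$ inside $C_\rho$ and $e_+\leftrightarrow\partial$ inside $T_e$ (otherwise a cycle through $e$ and $\partial$ closes up). Hence $f'_e=1'$ forces $e_-\not\leftrightarrow\partial$ in $\xi$ and forces $T_e$ to be extinct, so conditionally $T_e$ carries an Arboreal gas on $T^{\w}_{m,d}$ conditioned on extinction, with partition function $\ext_{m,d,p}$; and on the event in \eqref{eq:2n} the presence of a $2'$-child forces $e_+\leftrightarrow\partial$ through $T_e$, hence $e_-\not\leftrightarrow\partial$ in $\xi$, so conditionally $T_e$ carries an Arboreal gas on $T^{\w}_{m,d}$ conditioned on survival, with partition function $Z^S_{m,d,p}$. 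In every case the dependence on $\xi$ sits in the single common factor $w(\phi^{-1}(\xi))$, which cancels in the conditional probability — this is the asserted $\xi$-independence. (For \eqref{eq:1n} and \eqref{eq:2n} one tacitly restricts to $\xi$ for which the conditioning event, respectively the event on the left, is non-empty, i.e. $e_-\not\leftrightarrow\partial$ in $\xi$; otherwise the left side is $0$ or $0/0$.)

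It then remains to evaluate numerators inside the appropriate conditional Arboreal gas on $T^{\w}_{m,d}$. A child edge in state $2'$ contributes weight $pZ^S_{m-1}$ (the edge is open and its subtree of depth $m-1$ survives), a child in state $1'$ contributes $p\ext_{m-1}$ (open, subtree extinct), and a child in state $0'$ contributes $(1-p)Z_{m-1}$ (closed, subtree arbitrary); at most one child can be in state $2'$, which is why \eqref{eq:02n} and \eqref{eq:2n} single out a fixed $e_j$. Multiplying the relevant factors produces the four numerators. For the denominators, \cref{lem:recursion_partition} together with $Z_{m-1}=Z^S_{m-1}+\ext_{m-1}$ gives the identity $p\ext_{m-1}+(1-p)Z_{m-1}=(1-p)Z^S_{m-1}+\ext_{m-1}$, whence $Z^S_{m}=dpZ^S_{m-1}\bigl(p\ext_{m-1}+(1-p)Z_{m-1}\bigr)^{d-1}$ and $\ext_{m}=\bigl(p\ext_{m-1}+(1-p)Z_{m-1}\bigr)^{d}$; so $Z_{m,d,p}=Z^S_m+\ext_m$, $\ext_{m,d,p}$ and $Z^S_{m,d,p}$ are precisely the denominators in \eqref{eq:02n}--\eqref{eq:01n}, \eqref{eq:1n} and \eqref{eq:2n}, and the claimed identities follow.

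The step I expect to demand the most care is the second paragraph: making the cut-and-factorize argument fully rigorous — verifying that the forest event genuinely factors across the shared vertex $\partial$, that connectivity of $C_\rho$-edges to $\partial$ is intrinsic to $C_\rho$, and that in the two cases with $e$ open the only residual coupling to $C_\rho$ is the binary condition ``$e_-\leftrightarrow\partial$ in $C_\rho$ or not'', which is exactly what lets the $\xi$-factor cancel. Once that is in place, the rest is the weight bookkeeping above and \cref{lem:recursion_partition}.
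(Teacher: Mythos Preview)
Your proof is correct and follows essentially the same approach as the paper's: both compute the conditional probability as a ratio $N/D$, observe that the configuration on $C_\rho\cup\{e\}$ contributes a common multiplicative factor $w(\xi)\cdot(\text{weight of }e)$ to numerator and denominator which cancels, and then evaluate the residual sums over forests of the subtree below $e_+$. The paper does this by direct binomial expansion of $D$ in the $f'_e=0'$ case (and asserts the others are analogous), while you phrase the same computation as a cut-and-factorize argument and then invoke \cref{lem:recursion_partition} to recognise the three denominators as $Z_m$, $\ext_m$, and $Z^S_m$ respectively; this is a cosmetic difference, not a substantive one. Your added care about when $f'|_{C_\rho}=\xi$ is intrinsic to $C_\rho$ (namely, once $f'_e$ is fixed), and your caveat that \eqref{eq:1n}--\eqref{eq:2n} tacitly require $\xi$ compatible with the conditioning, are points the paper leaves implicit.
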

\begin{proof}
We only calculate the first expression since the rest follow from similar arguments. Write $$\P'_{n,d,p} \left(f'_{e_j} =2' , f'_{g} =1' \forall g \in \cS_1 , f'_{h} = 0' \forall  h \in \cS \setminus (\cS_1 \cup \{e_j\}) \Big|   f'_{e} = 0' , f'|_{C_\rho} = \xi\right) = \frac{N}{D}$$ where 
\begin{align*}
N &= \P'_{n,d,p} \left(f'_{e_j} =2' , f'_{g} =1' \forall g \in \cS_1 , f'_{h} = 0' \forall  h \in \cS \setminus (\cS_1 \cup \{e_j\}) ,  f'_{e} = 0' , f'|_{C_\rho} = \xi\right)\\
D &= \P'_{n,d,p} \left(   f'_{e} = 0' , f'|_{C_\rho} = \xi\right)
\end{align*}
Let us first determine $D$. Note that in any configuration with $f'_e = 0$ and $f'|_{C_{\rho}} = \xi$ must have term $w(\xi)(1-p)$ as a multiplicative factor. Now, for the children of $e$ and their descendants, we have two cases based on whether or not $e_+ \leftrightarrow \partial$.  This gives us the term  
$$dpZ^S_{m-1}\sum_{i = 0}^{d-1}\dbinom{d-1}{i} (p\ext_{m-1})^i((1-p)Z_{m-1})^{d-1-i} + \sum_{i = 0}^d \dbinom{d}{i}(p\ext_{m-1})^i((1-p)Z_{m-1})^{d-i}$$
Applying the binomial theorem, we get  
 $$D = \frac1{Z_n} w(\xi)(1-p)\left[dpZ^S_{m-1}(p\ext_{m-1} + (1-p)Z_{m-1})^{d-1} +  (p\ext_{m-1} + (1-p)Z_{m-1})^{d}\right]$$ 
Now we turn to $N$. Note that we also must include the multiplicative factor $(1-p)w(\xi)$ and the only difference is that the value of $f'$ on the children of $e$ is fixed. The weight of the configuration on the children of $e$ is $$pZ^S_{m-1}(p\ext_{m-1})^{|\cS_1|} ((1-p)Z_{m-1})^{d-|\cS_1|-1} $$ since we must have one surviving path and $\cS_1$ determines the edges with value $1'$.  Therefore, the numerator is $$\frac1{Z_n}(1-p)w(\xi)\left[pZ^S_{m-1}(p\ext_{m-1})^{|\cS_1|} ((1-p)Z_{m-1})^{d-|\cS_1|-1} \right]$$ After simplifications, we see that $\frac{N}{D}$ matches the expression as claimed and furthermore the expression does not depend on $\xi$. The remaining expressions follow from similar calculations. We point out that by definition, none of the children of $1'$ can be a $2'$ and exactly one child of a $2'$ must be a $2'$.
\end{proof}

We now wish to take a limit of the above expressions essentially using our computation in  \cref{cor:survival_prob}. Fix $e$ so that $e_+$ is at graph distance $k$ from $\rho$ for a fixed $k$.
\begin{lemma}\label{lem:Markovian_limit}
Fix $p \in [0,1)$ and $e, \cS, C_\rho$ be as above and let $q = q_{d,p}$ as in \cref{cor:survival_prob}. Suppose $\xi_n$ is a sequence of configurations with $\P'_{n,d,p} (f'|_{C_\rho} = \xi_n) >0$ for all $n$.  Then the following holds for all $e_j$ and for all $\cS_1 \subset \cS \setminus \{e_j\}$.
\begin{multline}
\lim_{n \to \infty }\P'_{n,d,p} \left(f'_{e_j} =2' , f'_{g} =1' \forall g \in \cS_1 , f'_{h} = 0' \forall  h \in \cS \setminus (\cS_1 \cup \{e_j\}) \Big|   f'_{e} = 0' , f'|_{C_\rho} = \xi_n\right) \\
=   \frac{pq (p(1-q))^{|\cS_1|} (1-p)^{d-|\cS_1|-1}}{  dpq (p(1-q) + (1-p))^{d-1}  + (p(1-q) + (1-p))^d}\label{eq:02}
\end{multline}
Also for any $\cS_1 \subset \cS$,
\begin{multline}
\lim_{n \to \infty } \P'_{n,d,p} \left(f'_{g} =1' \forall g \in \cS_1 , f'_{h} = 0' \forall  h \in \cS \setminus \cS_1) \Big|   f'_{e} = 0' , f'|_{C_\rho} = \xi_n\right) \\
=\frac{(p(1-q))^{|\cS_1|} (1-p)^{d-|\cS_1|}}{ dpq (p(1-q) + (1-p))^{d-1}  + (p(1-q) + (1-p))^d}\label{eq:01}
\end{multline}
For any $\cS_1 \subset \cS$,
\begin{equation}
\lim_{n \to \infty } \P'_{n,d,p} \left(f'_{g} =1' \forall g \in \cS_1 , f'_{h} = 0' \forall  h \in \cS \setminus \cS_1) \Big|   f'_{e} = 1' , f'|_{C_\rho} = \xi_n\right) 
=\frac{(p(1-q))^{|\cS_1|} (1-p)^{d-|\cS_1|} }{   (p(1-q) + (1-p))^{d}}\label{eq:1}
\end{equation}
Finally for all $e_j$ and for all $\cS_1 \subseteq \cS \setminus \{j\}$ and $p \in (1/d,1)$,
\begin{multline}
\lim_{n \to \infty } \P'_{n,d,p} \left(f'_{e_j} =2' , f'_{g} =1' \forall g \in \cS_1 , f'_{h} = 0' \forall  h \in \cS \setminus (\cS_1 \cup \{e_j)) \Big|   f'_{e} = 2' , f'|_{C_\rho} = \xi_n\right) \\
=   \frac{pq (p(1-q))^{|\cS_1|} (1-p)^{d-|\cS_1|-1} }{  dpq (p(1-q) + (1-p))^{d-1} }\label{eq:2}
\end{multline}

\end{lemma}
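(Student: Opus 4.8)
The plan is to derive each limit directly from the exact finite-$n$ formulas in \cref{lem:Markovian} by taking $n\to\infty$, using the asymptotics of the survival probability established in \cref{cor:survival_prob}. The key observation is that all four right-hand sides in \cref{lem:Markovian} are ratios of polynomials in the three quantities $Z^S_{m-1}$, $\ext_{m-1}$, and $Z_{m-1}=Z^S_{m-1}+\ext_{m-1}$, each of which appears homogeneously. Concretely, in every expression one can factor out a common power of $Z^S_{m-1}$ from numerator and denominator so that the ratio depends only on $K_{m-1}=\ext_{m-1}/Z^S_{m-1}$ and on $p,d$. For instance, in \eqref{eq:02n} divide top and bottom by $(Z^S_{m-1})^{d}$: the numerator becomes $p\,(pK_{m-1})^{|\cS_1|}((1-p)(1+K_{m-1}))^{d-|\cS_1|-1}$ and the denominator becomes $dp\,(pK_{m-1}+(1-p)(1+K_{m-1}))^{d-1}+(pK_{m-1}+(1-p)(1+K_{m-1}))^{d}$.

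First I would record the identity $q_{m} = Z^S_m/Z_m = 1/(1+K_m)$ (from the definition of $q_{n,d,p}$ and $K_{n,d,p}$), so that as $m=n-k\to\infty$ with $k$ fixed, $K_{m-1}\to K:=(1-q)/q$ by \cref{cor:survival_prob}. Note that $pK_{m-1}+(1-p)(1+K_{m-1}) = p K_{m-1} + (1-p) + (1-p)K_{m-1}$; substituting $K = (1-q)/q$ and simplifying using $1+K = 1/q$ gives $pK + (1-p)(1+K) = \tfrac1q\bigl(p(1-q) + (1-p)\bigr)$, and similarly $(1-p)(1+K) = (1-p)/q$ and $pK = p(1-q)/q$. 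Plugging these into the factored ratio, every factor of $1/q$ cancels between numerator and denominator (the numerator of \eqref{eq:02n} carries $d$ factors of $1/q$ total: one from each of the $|\cS_1|$ terms $pK$, one from each of the $d-|\cS_1|-1$ terms $(1-p)(1+K)$; the denominator likewise carries $d$ factors of $1/q$ in each summand), yielding exactly \eqref{eq:02}. I would then run the identical bookkeeping for \eqref{eq:01n}, \eqref{eq:1n}, \eqref{eq:2n} to obtain \eqref{eq:01}, \eqref{eq:1}, \eqref{eq:2} respectively; for \eqref{eq:2n}$\to$\eqref{eq:2} one uses the hypothesis $p\in(1/d,1)$ precisely to guarantee $q>0$ so that the denominator $dpZ^S_{m-1}(p\ext_{m-1}+(1-p)Z_{m-1})^{d-1}$ does not degenerate in the limit (equivalently so the event $f'_e=2'$ has asymptotically positive conditional probability).

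The step I expect to be the only real obstacle — and it is mild — is handling the subcritical and critical regimes $p\le 1/d$ uniformly, since there $q=0$ and $K_{m-1}\to\infty$. For the first three expressions this is not actually a problem: one keeps the ratio in the un-normalized form in $K_{m-1}$ and notes that dividing numerator and denominator by the top-degree term $(K_{m-1})^{d}$ (or $(K_{m-1})^{d-1}$ as appropriate) shows the limit still exists and equals the formula with $q=0$ substituted — which is consistent since $p(1-q)+(1-p)=1$ there, recovering pure Bernoulli$(p)$ transition probabilities. The statement of \cref{lem:Markovian_limit} already restricts \eqref{eq:2} to $p\in(1/d,1)$, so no care is needed there. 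Finally, I would remark that the limits are genuinely independent of the sequence $\xi_n$: this is immediate because, by \cref{lem:Markovian}, each finite-$n$ conditional probability is already independent of $\xi$, so the only limiting input is $K_{m-1}\to K$, which does not see $\xi_n$ at all.
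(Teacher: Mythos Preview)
Your approach is correct and essentially the same as the paper's: normalize the homogeneous expressions from \cref{lem:Markovian} and pass to the limit via \cref{cor:survival_prob}. The paper divides numerator and denominator by $Z_{m-1}^d$ rather than $(Z^S_{m-1})^d$; this is slightly cleaner because $Z^S_{m-1}/Z_{m-1}\to q$ and $\ext_{m-1}/Z_{m-1}\to 1-q$ hold for every $p\in[0,1)$, so no separate bookkeeping for the regime $p\le 1/d$ (where $K_{m-1}\to\infty$) is required. One minor slip in your write-up: after dividing \eqref{eq:02n} by $(Z^S_{m-1})^d$, the numerator carries $|\cS_1|+(d-|\cS_1|-1)=d-1$ factors of $1/q$, not $d$; the extra $q$ in the target numerator of \eqref{eq:02} appears when you clear the $q^{-d}$ coming from the second summand in the denominator.
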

\begin{proof}
For each expression in the right hand side of \cref{lem:Markovian}, we divide the numerator and the denominator by $Z_{m-1}^d$ and use \cref{cor:survival_prob}.
\end{proof}


This motivates us to define the following tree indexed Markov process on $\{0',1',2'\}$ with parameters $\theta, \alpha, \beta$ as follows. Assume the parent of the edges incident to $\rho$ has value $0'$ (to initiate the process).

\begin{itemize}
\item  If an edge $e$ has value $0'$ then toss a coin with success probability $\theta$. 
\begin{itemize}
\item If success occurs we choose uniformly at random one of the children edges of $e$ to be $2'$. Then for each of its siblings, decide to put value $1'$ or $0'$ independently with probability $\alpha$ or $1-\alpha$.
\item  If failure occurs, then for each of its children, decide to put value $1'$ or $0'$ independently with probability $\alpha$ or $1-\alpha$
\end{itemize}

\item If an edge $e$ has value $2'$ choose one of its children uniformly at random and assign value $2'$. For it's siblings, assign value $1'$ or $0'$  with probability $\alpha$, $1-\alpha.$
\item If an edge has value $1'$, assign independently each of it's children value $1'$ or $0'$ with probability $\alpha$ or $1-\alpha$.
\end{itemize}
Note that if $\theta =0$, the above process describes an i.i.d.\ Bernoulli $(\alpha)$ percolation process since we never get a $2'$ in the configuration. Call the probability measure induced by this Markov process by $\P'_{d,p}$.
\begin{thm}\label{thm:limit}
Fix $p \in (1/d,1)$. The probability measures $\P'_{n,d,p}$ converges to the Markov process described above with the following  parameters
$$
\theta = \frac{dpq}{dpq + (1-pq)}  = q \qquad \alpha = \frac{p(1-q)} {   p(1-q) + (1-p)}  = \frac1d.
$$
If $p \le 1/d$, $\P'_{n,d,p}$ converges to an i.i.d.\ Bernoulli edge percolation process with parameter $p$.
\end{thm}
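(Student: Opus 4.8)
The plan is to prove weak convergence of $\P'_{n,d,p}$ to $\P'_{d,p}$ by checking that cylinder probabilities converge, computing each one as a telescoping product of the one-step kernels made explicit in \cref{lem:Markovian}, and then passing to the limit with \cref{lem:Markovian_limit}.

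First I would reduce to the following claim: for every finite set $F$ of edges of $T_d$ and every $\eta\in\{0',1',2'\}^{F}$, $\P'_{n,d,p}(f'|_F=\eta)\to\P'_{d,p}(f'|_F=\eta)$. Since cylinder sets are clopen and generate the topology of the compact space $\{0',1',2'\}^{E(T_d)}$, this is equivalent to the asserted weak convergence (and for $n$ large $F\subseteq E_{n,d}$, so the left-hand side makes sense). Enlarging $F$ only strengthens the claim, so I would take $F$ to be the edge set of a finite subtree of $T_d$ containing $\rho$ which is moreover \emph{saturated}: for each vertex it contains either all $d$ of its child edges or none of them. The content of \cref{lem:Markovian} is exactly that, when edges are revealed from the root outwards, $\P'_{n,d,p}$ is a depth-inhomogeneous tree-indexed Markov chain: the conditional law of the $d$ child edges of a vertex $v$, given everything on the root side of the parent edge $e$ of $v$, depends only on the value $f'_e\in\{0',1',2'\}$ and on $m=n-\dist(\rho,v)$, and equals the corresponding right-hand side of \eqref{eq:02n}, \eqref{eq:01n}, \eqref{eq:1n} or \eqref{eq:2n}. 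Listing the vertices $v_1=\rho,v_2,\dots,v_r$ of $F$ that carry child edges in order of increasing distance from $\rho$, and writing $F_j$ for the set of edges of $F$ incident to $v_1,\dots,v_j$ (so that the parent edge of $v_j$ lies in $F_{j-1}$ by saturation, and $\rho$ has virtual parent value $0'$),
\begin{equation*}
\P'_{n,d,p}(f'|_F=\eta)=\prod_{j=1}^{r}\P'_{n,d,p}\bigl(f'|_{\mathrm{ch}(v_j)}=\eta|_{\mathrm{ch}(v_j)}\ \big|\ f'|_{F_{j-1}}=\eta|_{F_{j-1}}\bigr),
\end{equation*}
each factor being one of those four explicit kernels, evaluated at the value $\eta_e$ on the parent edge $e$ of $v_j$ and at the pattern prescribed by $\eta$ on $\mathrm{ch}(v_j)$ (the first factor, for $v_1=\rho$, requires the evident analogue of \cref{lem:Markovian} at the root, where the role of the parent edge is played by the virtual $0'$ edge). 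If $\eta$ is not in the image of $\phi$ on this window — i.e.\ violates one of the local constraints built into $\phi$, such as a $1'$ edge having a $2'$ child, a $0'$ edge having two $2'$ children, or a $2'$ edge not having exactly one $2'$ child among its children — then some factor, hence $\P'_{n,d,p}(f'|_F=\eta)$, vanishes for every $n$, and the same constraints give $\P'_{d,p}(f'|_F=\eta)=0$; so I may assume $\eta$ is admissible.

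Now let $n\to\infty$. Since $\dist(\rho,v_j)$ is fixed, $m\to\infty$, so each factor converges to the corresponding right-hand side of \eqref{eq:02}, \eqref{eq:01}, \eqref{eq:1} or \eqref{eq:2} by \cref{lem:Markovian_limit}, and a finite product of convergent sequences converges to the product of the limits. To identify this product with $\P'_{d,p}(f'|_F=\eta)$ I would expand $\P'_{d,p}(f'|_F=\eta)$ by the same telescoping over $v_1,\dots,v_r$ using the definition of the tree-indexed Markov process; the $j$-th factor is then $\theta$ or $1-\theta$ (according to whether a $2'$ child is prescribed), times $\tfrac1d$ when a $2'$ child is produced, times $\alpha$ for each prescribed $1'$ child and $1-\alpha$ for each prescribed $0'$ child. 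It therefore remains only to check that the limiting formulas of \cref{lem:Markovian_limit} equal these transition probabilities with $\theta=q$ and $\alpha=1/d$, which reduces to the elementary identities $p(1-q)+(1-p)=1-pq$, $dpq+(1-pq)=dp$, and hence $\tfrac{dpq}{dpq+(1-pq)}=q$ and $\tfrac{p(1-q)}{p(1-q)+(1-p)}=\tfrac1d$, all immediate from $q=q_{d,p}=\frac{dp-1}{p(d-1)}$, equivalently $p(1-q)=\frac{1-p}{d-1}$. For $p\le 1/d$ the identical argument applies with $q$ replaced throughout by $\lim_n q_{n,d,p}=0$ (the proof of \cref{lem:Markovian_limit} goes through verbatim, dividing numerators and denominators by $Z_{m-1}^{d}$ and invoking \cref{cor:survival_prob}): then $\theta=0$, a $2'$ can never be produced, and both the $0'$- and $1'$-child kernels reduce to i.i.d.\ Bernoulli$(p)$, so $\P'_{d,p}$ is i.i.d.\ Bernoulli$(p)$ percolation.

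The step I expect to be the only real nuisance is the positivity of the conditioning events needed to justify the telescoping, i.e.\ that whenever $\P'_{d,p}(f'|_F=\eta)>0$ each event $\{f'|_{F_{j-1}}=\eta|_{F_{j-1}},\,f'_{e}=\eta_{e}\}$ has positive $\P'_{n,d,p}$-probability for all large $n$; this is the same point as the matching of the probability-zero cases across the limit. I would handle it by exhibiting a concrete forest on $T^{\w}_{n,d}$ whose $\phi$-image restricts to $\eta$ on $F$ — for instance, prolong each prescribed $2'$ edge along a downward ray to a leaf of $T_{n,d}$ and keep every edge outside $F$ and outside these rays closed — and verifying it is a forest realizing exactly the prescribed local picture. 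This amounts to the routine but slightly delicate statement that the image of $\phi$ on a saturated finite window is cut out precisely by the local constraints listed above, once configurations are allowed to be completed toward the leaves.
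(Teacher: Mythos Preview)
Your proposal is correct and follows essentially the same route as the paper's own proof: reveal the configuration from the root outward, use \cref{lem:Markovian} to see that the conditional law of the children depends only on the parent's value and the depth, pass to the limit via \cref{lem:Markovian_limit}, and then match the limiting kernels with the $(\theta,\alpha)$-description by the same algebraic identities. Your write-up is in fact more explicit than the paper's on the saturated-window reduction, the admissibility/positivity bookkeeping, and the $p\le 1/d$ case, but the underlying argument is the same.
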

\begin{proof}
Let $f'_n$ denote a sample from $\P'_{n,d,p}$. We first show that $f'_n$ converges in law to $f'$ where $f'$ is a tree indexed Markov process with state space $\{0',1',2'\}$ whose transition probabilities are described by \cref{lem:Markovian_limit} (the chain starts from $0'$ being the parent of $\rho$). In particular, the joint distribution of the children given the parent is $0'$ is described by \eqref{eq:02}, \eqref{eq:01}, that if the parent is $1'$ is described by \eqref{eq:1} and that if the parent is $2'$ is described by \eqref{eq:2}. We see this by induction. The edges attached to the root has the right joint distribution from the limits given by \eqref{eq:02n}, \eqref{eq:02} and \eqref{eq:01n}, \eqref{eq:01} (applied without any conditioning on $\xi_n$). Suppose we sample $f'_n$ step by step, always sampling all the children of an edge in a particular step. Suppose we have sampled $f'_n|_{T'} =\xi_k$ for some subtree $T'$ containing $k$ edges. We pick an edge $e$ in $T'$ whose children are yet to be sampled. Note that even conditioned on all possible extensions of $\xi_k$ to a valid configuration $\xi_n$ on the component of the root in $T_{n,d} \setminus \{e\}$, the joint distribution of all the children of $e$ converge to that described by the Markov chain described in \cref{lem:Markovian,lem:Markovian_limit}. Therefore taking the average over all possible such $\xi_n$ which are extensions of $\xi_k$, the convergence also holds since the conditional distribution does not depend on the extension. Since $T'$ was an arbitrary subtree the limit of $f_n'$ has the above described law.

It remains to show that $f' \sim \P'_{d,p}$. This is a straightforward verification. 
Looking at \eqref{eq:1}, we have the probability of getting $1'$ in $\cS_1$ and $0'$ in the rest is given by $$(\alpha)^{|\cS_1|}(1-\alpha)^{d-|\cS_1|} = \frac{(p(1-q))^{|\cS_1|} (1-p)^{d-|\cS_1|} }{   (p(1-q) + (1-p))^{|\cS_1|}},$$
which matches the description.
For \eqref{eq:02}, the probability of producing the event is given by $$\frac{\theta}{d} \alpha^{|\cS_1|}(1-\alpha)^{d-1-|\cS_1|} $$
which matches the expression on the right hand side of \eqref{eq:02} after plugging in the value. For \eqref{eq:01}, the probability of the event is $$(1-\theta)\alpha^{|\cS_1|}(1-\alpha)^{d-|\cS_1|} $$
which also matches with the right hand side of \eqref{eq:01}. Finally the probability of the event in \eqref{eq:2} in the description is given by
$$
\frac1d \alpha^{|\cS_1|}(1-\alpha)^{d-|\cS_1|}
$$
which also matches with the right hand side of \eqref{eq:2}.
\end{proof}
%

As an immediate consequence, we obtain an improved version of \cref{thm:main} with an explicit description of the limiting process.
\begin{thm}\label{thm:main_improved}
The law $\P_{n,d,p}$ converges in law to pushforward under $\phi^{-1}$ of $\P'_{d,p}$. 
\end{thm}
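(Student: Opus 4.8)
The plan is to deduce \cref{thm:main_improved} from the work already done, primarily \cref{thm:limit}, by pushing forward the established weak convergence through the local map $\phi^{-1}$. First I would recall that by definition $\P'_{n,d,p} = (\phi)_* \P_{n,d,p}$ is the pushforward of the arboreal gas measure under the bijection $\phi$, and that $\phi^{-1}$ is a \emph{local} map: it sends $0' \mapsto 0$ and both $1', 2' \mapsto 1$ coordinate-wise, with no dependence on global connectivity. Hence for any finite edge set $S \subset E(T_d)$, the event $\{f|_S = \eta\}$ for $\eta \in \{0,1\}^S$ is the $\phi$-preimage of a finite union of cylinder events in the $\{0',1',2'\}$-alphabet, namely $\bigcup_{\eta' : (\phi^{-1})(\eta') = \eta} \{f'|_S = \eta'\}$. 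Therefore $\P_{n,d,p}(f|_S = \eta) = \P'_{n,d,p}\big((\phi^{-1}|_S)^{-1}(\eta)\big)$ exactly, for every $n$ large enough that $S \subset E_{n,d}$ (identifying $T_d$'s first $n$ levels with $T_{n,d}$ as in \cref{sec:intro}).

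Next I would invoke \cref{thm:limit}: $\P'_{n,d,p} \to \P'_{d,p}$ weakly, where $\P'_{d,p}$ is the tree-indexed Markov chain with the stated parameters (or, when $p \le 1/d$, i.i.d.\ Bernoulli$(p)$ in the $\{0,1\}$-alphabet, which already is the claimed limit and needs nothing further). Weak convergence on $\{0',1',2'\}^{E(T_d)}$ with the product topology means convergence of probabilities of all finite cylinder events; since our event $(\phi^{-1}|_S)^{-1}(\eta)$ is a finite union of such cylinders, $\P'_{n,d,p}\big((\phi^{-1}|_S)^{-1}(\eta)\big) \to \P'_{d,p}\big((\phi^{-1}|_S)^{-1}(\eta)\big) = \big((\phi^{-1})_* \P'_{d,p}\big)(f|_S = \eta)$. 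Combining with the exact identity from the previous paragraph gives $\P_{n,d,p}(f|_S = \eta) \to \big((\phi^{-1})_* \P'_{d,p}\big)(f|_S = \eta)$ for every finite $S$ and every $\eta$, which is precisely weak convergence $\P_{n,d,p} \to (\phi^{-1})_* \P'_{d,p}$ on $\{0,1\}^{E(T_d)}$. This also completes the proof of the bare convergence statement in \cref{thm:main}, with $\P_{d,p} := (\phi^{-1})_* \P'_{d,p}$.

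The one genuinely delicate point — and the step I expect to need the most care — is the identification of the infinite-volume limit of the sequence of spaces: $T_{n,d}^{\w}$ is \emph{not} a subgraph of $T_d$, because of the wiring at $\partial$, so "weak convergence" must be interpreted as convergence of the restrictions to any fixed finite edge set $S$, using the canonical inclusion $E_{n,d} \supset S$ once $n$ exceeds the depth of $S$. I would state this explicitly: fix $S$, pick $n_0$ so that all edges of $S$ lie strictly within the first $n_0$ levels (so that the wiring does not affect $S$), and run the argument for $n \ge n_0$. A second, more cosmetic point is that \cref{thm:limit}'s convergence was phrased via the conditional "Markov-chain" description rather than as abstract weak convergence; but the inductive argument in its proof — sampling the children of an edge one generation at a time and checking that the conditional law converges uniformly over all valid extensions $\xi_n$ — is exactly a proof of convergence of all finite-dimensional cylinder probabilities, so this is already in hand and I would simply cite it. No further estimates are required; the remainder is the bookkeeping of unwinding $\phi^{-1}$ on cylinders, which is routine.
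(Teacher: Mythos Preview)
Your proposal is correct and follows essentially the same approach as the paper: the paper's proof is the one-line observation that $\phi^{-1}$ is continuous in the product (pointwise) topology, so weak convergence $\P'_{n,d,p}\to\P'_{d,p}$ from \cref{thm:limit} pushes forward to $\P_{n,d,p}=(\phi^{-1})_*\P'_{n,d,p}\to(\phi^{-1})_*\P'_{d,p}$. Your cylinder-set argument is precisely the unpacking of that continuity statement, and your care about identifying $E_{n,d}$ with the first $n$ levels of $E(T_d)$ addresses a point the paper leaves implicit.
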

\begin{proof}
Clearly $\phi^{-1}$ is continuous in the topology of pointwise convergence, from which the result is immediate.
\end{proof}

\begin{proof}[Proof of \cref{thm:description}]
This is immediate from the description of Markov process $\P'_{d,p}$. Indeed if $p \le 1/d$, $\theta =0$, and the process reduces to a Bernoulli $(p)$ percolation. On the other hand if $p \in (1/d,1)$,  since $\alpha>0$, it is clear (for example applying an appropriate Borel--Cantelli) that there are infinitely many infinite components. Each component is one ended since $2'$ produces exactly one $2'$. Furthermore the offsprings of $1'$ which is a sibling of a $2'$ are distributed as a branching process with parameter Bin$(d,1/d)$ (critical branching process). 
\end{proof}
\bibliographystyle{amsplain}
\bibliography{forest}

\providecommand{\bysame}{\leavevmode\hbox to3em{\hrulefill}\thinspace}
\providecommand{\MR}{\relax\ifhmode\unskip\space\fi MR }
\providecommand{\MRhref}[2]{%
  \href{http://www.ams.org/mathscinet-getitem?mr=#1}{#2}
}
\providecommand{\href}[2]{#2}
\begin{thebibliography}{1}

\bibitem{athreya2004branching}
Krishna~B Athreya, Peter~E Ney, and PE~Ney, \emph{Branching processes}, Courier
  Corporation, 2004.

\bibitem{bauerschmidt2021percolation}
Roland Bauerschmidt, Nicholas Crawford, and Tyler Helmuth, \emph{Percolation
  transition for random forests in {$ d \geq 3$}}, arXiv preprint
  arXiv:2107.01878 (2021).

\bibitem{bauerschmidt2021random}
Roland Bauerschmidt, Nicholas Crawford, Tyler Helmuth, and Andrew Swan,
  \emph{Random spanning forests and hyperbolic symmetry}, Communications in
  Mathematical Physics \textbf{381} (2021), no.~3, 1223--1261.

\bibitem{easo}
Philip Easo, \emph{Wired arboreal gas on regular trees}, In preparation, 2021.

\bibitem{haggstrom1996random}
Olle H{\"a}ggstr{\"o}m, \emph{The random-cluster model on a homogeneous tree},
  Probability Theory and Related Fields \textbf{104} (1996), no.~2, 231--253.

\bibitem{luczak1992components}
Tomasz {L}uczak and Boris Pittel, \emph{Components of random forests},
  Combinatorics, Probability and Computing \textbf{1} (1992), no.~1, 35--52.

\bibitem{lyons2017probability}
Russell Lyons and Yuval Peres, \emph{Probability on trees and networks},
  vol.~42, Cambridge University Press, 2017.

\bibitem{martin2017critical}
James Martin and Dominic Yeo, \emph{Critical random forests}, arXiv preprint
  arXiv:1709.07514 (2017).

\end{thebibliography}
\end{document}